\newtheorem{assumption}{Assumption}
\newtheorem{lemma}{Lemma}
\newtheorem{remark}{Remark}
\newtheorem{theorem}{Theorem}
\title{\LARGE \bf
Output Feedback Stabilization of Semilinear Parabolic PDEs using Backstepping
}
\author{Agus Hasan
\thanks{The author is with Maersk Mc-Kinney Moller Institute,
        University of Southern Denmark, 5230 Odense, Denmark.
        Email: {\tt\small agha@mmmi.sdu.dk}.}
}
\begin{document}

\maketitle
\thispagestyle{empty}
\pagestyle{empty}

\begin{abstract}
In this paper, we present output feedback boundary stabilization for a class of semilinear parabolic PDEs with a boundary measurement and an actuation located at the same place. The method uses backstepping transformations, where the state and error systems are proved to be locally exponentially stable in the $\mathbb{H}^4$ norm. The stability of the transformed systems are obtained by constructing a strict Lyapunov function. A numerical example using the FitzHugh-Nagumo equation shows the proposed control law stabilizes the system into its equilibrium solution.
\end{abstract}

\section{INTRODUCTION}

Output feedback stabilization of systems modeled by partial differential equations (PDEs) using backstepping is an active research area, see e.g., \cite{A1,A2,A3,Cerpa1,Mail4,Mail5,Mail6,Mail7}. In the infinite-dimensional backstepping method, the boundary feedback control law and the state observer are designed by employing Volterra integral transformations \cite{Krstic1}. The most striking features is both the control gain and the observer gain can be found analytically for many cases (\cite{Krstic2,Vazquez1}).

For nonlinear finite-dimensional systems, the backstepping method has reached its maturity in the last decade \cite{Kr}. Furthermore, it has industrial application, e.g., in oil well drilling application (\cite{Mail2,Mail3}). However, the success has been limited to linear PDEs. In the last few years, the results on the linear backstepping design have been extended to the nonlinear backstepping design with Volterra nonlinearities in \cite{Vazquez2} and \cite{Vazquez3}. In these papers, the nonlinear infinite-dimensional operators of a Volterra-type with infinite sums of integrals in the spatial variable were introduced for stabilization of semilinear parabolic PDEs. Significant results were in the development of control design for cascade of PDEs and nonlinear ODEs (\cite{Mail1,Niko,Delp,Niko1}). Recent progresses in nonlinear backstepping control design for a 2$\times$2 quasilinear hyperbolic system are presented in \cite{Coron} and \cite{Vaz}. These references are central to the development of the output feedback stabilization of the present paper. The idea is to construct a strict Lyapunov function, previously developed in \cite{Coron1} and \cite{Coron2}, which is locally equivalent to the $\mathbb{H}^4$ norm. The present paper is a continuation of a paper on backstepping boundary control of semilinear parabolic PDEs \cite{Hasan} written by the author.

This paper is organized as follows. The output feedback stabilization problem is stated in section II. In section III, we briefly review the output feedback stabilization results for linear parabolic PDEs with Dirichlet's boundary feedback. The contribution of this paper is presented in section IV. Here, we present our main result, which is proven in section V. A numerical example is presented in VI. Finally, section VII contains conclusions and future works.

\section{PROBLEM STATEMENT}

We consider output feedback stabilization for the following semilinear parabolic PDEs:
\begin{eqnarray}
u_t(x,t) &=& u_{xx}(x,t)+f_{NL}(u(x,t)),\label{main1}
\end{eqnarray}
with boundary conditions
\begin{eqnarray}
u(0,t) &=& 0,\label{main2}\\
u(1,t) &=& U(t).\label{main3}
\end{eqnarray}

\begin{assumption}
The nonlinear function $f_{NL}$ is twice continuously differentiable, i.e., $f_{NL}\in \mathbb{C}^2([0,1])$, and has an equilibrium at the origin, i.e., $f_{NL}(0)=0$.
\end{assumption}

The task is to find a feedback control law $U(t)$ using the infinite-dimensional backstepping design to make the origin of system \eqref{main1}-\eqref{main3} locally exponentially stable using only measurements
\begin{eqnarray}
y(t)=u(1,t).
\end{eqnarray}
System \eqref{main1} arises in heat and mass transfer, biology, and ecology. Two prominent examples are the Fisher's equation, used to model the spatial spread of an advantageous allele \cite{Fisher}, and the FitzHugh-Nagumo equation, used to model nerve membrane (\cite{Fitz,Nagumo}).

\section{OUTPUT FEEDBACK STABILIZATION OF LINEAR PARABOLIC SYSTEMS}

Consider the following linear parabolic systems:
\begin{eqnarray}
u_t(x,t) &=& u_{xx}(x,t)+\lambda u(x,t),\label{lins1}
\end{eqnarray}
with boundary conditions
\begin{eqnarray}
u(0,t) &=& 0,\label{lins2}\\
u(1,t) &=& U(t).\label{lins3}
\end{eqnarray}
If we select the control law as:
\begin{eqnarray}
U(t) &=& \int_0^1\! k(1,y)\hat{u}(y,t)\,\mathrm{d}y,\label{cont}
\end{eqnarray}
where $\hat{u}$ is computed from
\begin{eqnarray}
\hat{u}_t(x,t) &=& \hat{u}_{xx}(x,t)+\lambda\hat{u}(x,t)+p(x)\tilde{u}(1,t)\label{obslins1}
\end{eqnarray}
and where the observer gain is given by
\begin{eqnarray}
p(x) = p(x,1)
\end{eqnarray}
with boundary conditions
\begin{eqnarray}
\hat{u}(0,t) &=& 0,\\
\hat{u}(1,t) &=& U(t).\label{obslins3}
\end{eqnarray}
it can be shown that the origin of \eqref{lins1} is exponentially stable, where $k(x,y)$ is solution of the following second order hyperbolic-type PDEs:
\begin{eqnarray}
k_{xx}(x,y) - k_{yy}(x,y) &=& \lambda k(x,y),\label{ker1}\\
k(x,0) &=& 0,\\
k(x,x) &=& -\frac{\lambda}{2}x.\label{ker3}
\end{eqnarray}
Similarly, the kernel $p(x,y)$ is solution of the following kernel equations:
\begin{eqnarray}
p_{xx}(x,y)-p_{yy}(x,y) &=& -\lambda p(x,y)\label{obsker1}\\
p(0,y) &=& 0\\
p(x,x) &=& -\frac{\lambda}{2}x\label{obsker3}
\end{eqnarray}

Both kernel functions evolve in a triangular domain $\mathcal{T}=\{(x,y):0\leq y\leq x\leq1\}$.
\begin{theorem}
Consider systems \eqref{lins1}-\eqref{lins3} and \eqref{obslins1}-\eqref{obslins3} with control law \eqref{cont} and initial conditions $u_0,\hat{u}_0\in\mathbb{L}^2([0,1])$. Then there exists $\mu>0$ and $c>0$, such that:
\begin{eqnarray}
\|u(\cdot,t)\|^2_{\mathbb{L}^2}+\|\hat{u}(\cdot,t)\|^2_{\mathbb{L}^2} \leq ce^{-\mu t}\left(\|u_0\|^2_{\mathbb{L}^2}+\|\hat{u}_0\|^2_{\mathbb{L}^2}\right)
\end{eqnarray}
\end{theorem}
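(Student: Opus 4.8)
The plan is to establish the claim by the classical output-feedback backstepping construction \cite{Krstic1}: use the two Volterra transformations to map the plant and the observer error onto a cascade of heat equations, then conclude with a composite strict Lyapunov functional and push the decay back through the (boundedly invertible) transformations. First I would introduce the estimation error $\tilde u := u - \hat u$; from \eqref{lins1}, \eqref{obslins1} and the boundary data it obeys $\tilde u_t = \tilde u_{xx} + \lambda\tilde u - p(x)\,\tilde u(1,t)$ with homogeneous boundary conditions. The hyperbolic kernel systems \eqref{ker1}--\eqref{ker3} and \eqref{obsker1}--\eqref{obsker3} are classically well posed on $\mathcal{T}$ (convert them to Volterra integral equations and iterate), with unique $\mathbb{C}^2$ kernels $k$ and $p$. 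I would then apply the observer transformation $\tilde u(x,t) = \tilde w(x,t) - \int_x^1 p(x,y)\tilde w(y,t)\,\mathrm{d}y$: differentiating under the integral sign, integrating by parts in $y$ and invoking \eqref{obsker1}--\eqref{obsker3} --- which is exactly what removes the reaction term --- maps the error system onto the pure heat equation $\tilde w_t = \tilde w_{xx}$, $\tilde w(0,t) = \tilde w(1,t) = 0$. In parallel, I would apply $\hat w(x,t) = \hat u(x,t) - \int_0^x k(x,y)\hat u(y,t)\,\mathrm{d}y$ to \eqref{obslins1}; using \eqref{ker1}--\eqref{ker3} and the control law \eqref{cont} --- whose sole role is to enforce $\hat w(1,t) = \hat u(1,t) - \int_0^1 k(1,y)\hat u(y,t)\,\mathrm{d}y = 0$ --- one obtains $\hat w_t = \hat w_{xx} + \gamma(x)\,\tilde u(1,t)$, $\hat w(0,t) = \hat w(1,t) = 0$, with $\gamma(x) = p(x) - \int_0^x k(x,y)p(y)\,\mathrm{d}y$ the transformed injection gain. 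Thus in $(\hat w,\tilde w)$ coordinates the error subsystem is an autonomous, exponentially stable heat equation that drives the $\hat w$-heat equation through a boundary functional of the error.

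Second, I would take the composite Lyapunov function $V(t) = \|\hat w(\cdot,t)\|_{\mathbb{L}^2}^2 + A\,\|\tilde w(\cdot,t)\|_{\mathbb{L}^2}^2$ with a weight $A>0$ to be fixed. Integration by parts and the Dirichlet conditions give $\frac{\mathrm{d}}{\mathrm{d}t}\|\tilde w\|_{\mathbb{L}^2}^2 = -2\|\tilde w_x\|_{\mathbb{L}^2}^2 \le -2\pi^2\|\tilde w\|_{\mathbb{L}^2}^2$ by Poincar\'e's inequality, and $\frac{\mathrm{d}}{\mathrm{d}t}\|\hat w\|_{\mathbb{L}^2}^2 = -2\|\hat w_x\|_{\mathbb{L}^2}^2 + 2\,\tilde u(1,t)\int_0^1 \gamma(x)\hat w(x,t)\,\mathrm{d}x$. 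The cross term is the only nontrivial contribution: I would estimate the output error by the error state through a trace/interpolation bound $|\tilde u(1,t)| \le C\big(\|\tilde w\|_{\mathbb{L}^2} + \|\tilde w_x\|_{\mathbb{L}^2}\big)$ and then use Young's inequality to absorb $2\,|\tilde u(1,t)|\,\|\gamma\|_{\mathbb{L}^2}\|\hat w\|_{\mathbb{L}^2}$ into $\|\hat w\|_{\mathbb{L}^2}^2 + C_1\big(\|\tilde w\|_{\mathbb{L}^2}^2 + \|\tilde w_x\|_{\mathbb{L}^2}^2\big)$. Since $2\pi^2 > 1$, choosing $A$ large enough that $C_1\|\tilde w_x\|_{\mathbb{L}^2}^2$ is dominated by $A\|\tilde w_x\|_{\mathbb{L}^2}^2$ --- with the leftover $\|\tilde w\|_{\mathbb{L}^2}^2$ handled by the residual $\tilde w$-dissipation via Poincar\'e --- yields $\dot V \le -c_1\|\hat w\|_{\mathbb{L}^2}^2 - c_2\|\tilde w\|_{\mathbb{L}^2}^2 \le -\mu V$ for some $\mu>0$, hence $V(t) \le e^{-\mu t}V(0)$.

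Finally, I would transfer the decay back to the original variables. As $k$ and $p$ are continuous on the compact set $\mathcal{T}$, the Volterra maps $\hat u\mapsto\hat w$ and $\tilde u\mapsto\tilde w$ and their inverses (again Volterra transformations with continuous kernels) are bounded on $\mathbb{L}^2([0,1])$; therefore $\|\hat u(\cdot,t)\|_{\mathbb{L}^2}^2 + \|\tilde u(\cdot,t)\|_{\mathbb{L}^2}^2 \lesssim V(t)$, $V(0) \lesssim \|\hat u_0\|_{\mathbb{L}^2}^2 + \|\tilde u_0\|_{\mathbb{L}^2}^2 \lesssim \|u_0\|_{\mathbb{L}^2}^2 + \|\hat u_0\|_{\mathbb{L}^2}^2$, and $\|u\|_{\mathbb{L}^2}^2 \le 2\|\hat u\|_{\mathbb{L}^2}^2 + 2\|\tilde u\|_{\mathbb{L}^2}^2$; combining these yields the stated estimate with a suitable $c>0$ and the same exponent $\mu$. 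I expect the main obstacle to be exactly this cross term in the $\hat w$-equation: the observer feeds back a boundary value of the error, which is not controlled by the $\mathbb{L}^2$ norm alone, so one must borrow the interior dissipation $\|\tilde w_x\|_{\mathbb{L}^2}^2$ of the error equation (as above, via the weight $A$), or else enlarge $V$ with a higher-order term or exploit the parabolic smoothing of $\tilde w$; relatedly, the formal integrations by parts for merely $\mathbb{L}^2$ data should be justified by first treating smooth data and passing to the limit by an analytic-semigroup/density argument.
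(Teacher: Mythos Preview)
Your proposal is correct and follows essentially the same route as the paper: the same pair of Volterra transformations mapping $(\hat u,\tilde u)$ onto a cascade of Dirichlet heat equations, the same weighted $\mathbb{L}^2$ Lyapunov functional with a free constant $A$ on the error component, and the same device of absorbing the boundary cross term into the error dissipation $\|\tilde w_x\|_{\mathbb{L}^2}^2$ (via Young's inequality and a sufficiently large $A$) before applying Poincar\'e and the bounded invertibility of the maps. The only discrepancy is the precise form of the coupling: in the paper the transformed observer equation is forced by $\bar p_1(x)\,\tilde\gamma_x(1,t)$, a Neumann trace of the error target, rather than by $\gamma(x)\,\tilde u(1,t)$ --- indeed, with Dirichlet actuation at $x=1$ one has $\tilde u(1,t)\equiv 0$, so the effective injection is through the flux --- but the estimate you outline (bounding the boundary value by the $\mathbb{H}^1$ energy of the error and choosing $A$ large) is exactly the mechanism the paper uses for that term.
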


\begin{proof}
Define the observer estimate error $\tilde{u}=u-\hat{u}$. Consider the following Volterra integral transformations:
\begin{eqnarray}
\hat{\gamma}(x,t) &=& \hat{u}(x,t) -\int_0^x\! k(x,y)\hat{u}(y,t)\,\mathrm{d}y\label{trans}\\
\tilde{u}(x,t) &=& \tilde{\gamma}(x,t) - \int_x^1\! p(x,y)\tilde{\gamma}(y,t)\,\mathrm{d}y\label{errtrans}
\end{eqnarray}
It can be proven that, if the kernels verify \eqref{ker1}-\eqref{obsker3}, then $\hat{\gamma}$ and $\tilde{\gamma}$ verify the following equations:
\begin{eqnarray}
\hat{\gamma}_t(x,t) &=& \hat{\gamma}_{xx}(x,t)-\bar{p}_1(x)\tilde{\gamma}_x(1,t)\label{maint1}\\
\hat{\gamma}(0,t) &=& 0\\
\hat{\gamma}(1,t) &=& 0\label{maint3}\\
\tilde{\gamma}_t(x,t) &=& \tilde{\gamma}_{xx}(x,t)\label{lol1}\\
\tilde{\gamma}(0,t) &=& 0\\
\tilde{\gamma}(1,t) &=& 0,\label{lol3}
\end{eqnarray}
where
\begin{eqnarray}
\bar{p}(x) = p(x)-\int_0^x\! k(x,y)p(y)\,\mathrm{d}y.
\end{eqnarray}
Let
\begin{eqnarray}
U(t) = \frac{1}{2}\int_0^1\! \hat{\gamma}(x,t)^2\,\mathrm{d}x + \frac{A}{2}\int_0^1\! \tilde{\gamma}(x,t)^2\,\mathrm{d}x,\label{U}
\end{eqnarray}
where $A$ to be determined letter. The first derivative with respect to $t$ along \eqref{maint1}-\eqref{lol3} is given by:
\begin{eqnarray}
\dot{U}(t) &=& -\int_0^1\! \hat{\gamma}_x(x,t)^2 \,\mathrm{d}x -A\int_0^1\! \tilde{\gamma}_x(x,t)^2 \,\mathrm{d}x \nonumber\\
&&- \int_0^1\! \hat{\gamma}(x,t)\bar{p}_1(x)\tilde{\gamma}_x(1,t) \,\mathrm{d}x.\label{Vdot}
\end{eqnarray}
Let $B = \max_{x\in[0,1]}\bar{p}_1(x)$. The last term of the right hand side of \eqref{Vdot} is estimated as:
\begin{eqnarray}
- \tilde{\gamma}_x(1,t)B\int_0^1\! \hat{\gamma}(x,t) \,\mathrm{d}x &\leq& \frac{1}{2}\int_0^1\! \hat{\gamma}_x(x,t)^2 \,\mathrm{d}x\\
&&+\frac{B^2}{2}\int_0^1\! \tilde{\gamma}_x(x,t)^2 \,\mathrm{d}x.\nonumber
\end{eqnarray}
Applying Poincare's inequality, we have:
\begin{eqnarray}
\dot{U}(t) &\leq& -\frac{1}{2}\int_0^1\! \hat{\gamma}_x(x,t)^2 \,\mathrm{d}x -\left(A-\frac{B^2}{2}\right)\int_0^1\! \tilde{\gamma}_x(x,t)^2 \,\mathrm{d}x\nonumber\\
&\leq&-\frac{1}{8}\int_0^1\! \hat{\gamma}(x,t)^2 \,\mathrm{d}x -\frac{1}{4}\left(A-\frac{B^2}{2}\right)\int_0^1\! \tilde{\gamma}(x,t)^2 \,\mathrm{d}x\nonumber\\
&=&-\frac{1}{4}U(t),
\end{eqnarray}
where we choose $A=B^2$. This shows $\mathbb{L}^2$ exponential stability of the origin for the $\hat{\gamma}$ and $\tilde{\gamma}$ systems. Since the transformation \eqref{trans}-\eqref{errtrans} are invertible with the inverse transformations are defined as follow:
\begin{eqnarray}
\hat{u}(x,t) &=& \hat{\gamma}(x,t) + \int_0^x\! l(x,y)\hat{\gamma}(y,t)\,\mathrm{d}y\label{inv}\\
\tilde{\gamma}(x,t) &=& \tilde{u}(x,t) + \int_x^1\! r(x,y)\tilde{u}(y,t)\,\mathrm{d}y\label{errinv}
\end{eqnarray}
and since $\tilde{\gamma}=\gamma-\hat{\gamma}$, this concludes the proof.
\end{proof}

\section{OUTPUT FEEDBACK STABILIZATION OF SEMILINEAR PARABOLIC SYSTEMS}

We want to show that the linear controller \eqref{cont} works locally for the semilinear system \eqref{main1}. We design the following semilinear observer:
\begin{eqnarray}
\hat{u}_t(x,t) &=& \hat{u}_{xx}(x,t)+f_{NL}(\hat{u}(x,t))+p(x)\tilde{u}(1,t),\label{obsmain1}
\end{eqnarray}
We can write the semilinear parabolic PDEs in a form equivalent up to linear terms to \eqref{obslins1} as follow:
\begin{eqnarray}
\hat{u}_t(x,t) &=& \hat{u}_{xx}(x,t)+\lambda \hat{u}(x,t) + f(\hat{u}(x,t))\nonumber\\
&&+p(x)\tilde{u}(1,t)\label{Ais}
\end{eqnarray}
where
\begin{eqnarray}
f(\hat{u}(x,t)) = f_{NL}(\hat{u}(x,t))-\lambda\hat{u}(x,t),
\end{eqnarray}
and
\begin{eqnarray}
\frac{\partial f_{NL}}{\partial \hat{u}}(0)=\lambda.
\end{eqnarray}
with boundary conditions
\begin{eqnarray}
\hat{u}(0,t) &=& 0,\label{obsmain2}\\
\hat{u}(1,t) &=& U(t).\label{obsmain3}
\end{eqnarray}

\begin{assumption}
\begin{eqnarray}
\lambda\neq0
\end{eqnarray}
\end{assumption}
\begin{remark}
Since $f\in\mathbb{C}^2([0,1])$ and $f(0)=\frac{\partial f}{\partial u}(0)=0$, there exists a $\delta_f$ and positive constants $K_1$, $K_2$, $K_3$ such that if $|u|\leq\delta_f$, then for any $v\in\mathbb{R}$:
\begin{eqnarray}
|f(u)| &\leq& K_1|u|^2\\
\left|\frac{\partial f(u)}{\partial u}\right| &\leq& K_2|u|\\
\left|\frac{\partial^2 f(u)}{\partial u^2}v\right| &\leq& K_3|v|
\end{eqnarray}
\end{remark}

The error system is given by:
\begin{eqnarray}
\tilde{u}_t(x,t) &=& \tilde{u}_{xx}(x,t)+\lambda\tilde{u}(x,t)\nonumber\\
&&+ g(u(x,t),\hat{u}(x,t))-p(x)\tilde{u}(1,t)\label{errnonlin1}
\end{eqnarray}
with boundary conditions
\begin{eqnarray}
\tilde{u}(0,t) &=& 0\label{errnonlin2},\\
\tilde{u}(1,t) &=& 0\label{errnonlin3}.
\end{eqnarray}
The main result of this paper is stated as follows.
\begin{theorem}
Consider system \eqref{main1} with boundary conditions \eqref{main2}-\eqref{main3} and initial condition $u_0\in\mathbb{H}^4([0,1])$ where the kernel $k$ is obtained from \eqref{ker1}-\eqref{ker3}, and system \eqref{obsmain1} with boundary conditions \eqref{obsmain2}-\eqref{obsmain3} and initial condition $\hat{u}_0\in\mathbb{H}^4([0,1])$ where the kernel $p$ is obtained from \eqref{obsker1}-\eqref{obsker3}. Then for every $\mu>0$, there exist $\delta>0$ and $c>0$ such that, if $\|u_0\|_{\mathbb{H}^4}+\|\hat{u}_0\|_{\mathbb{H}^4}\leq\delta$, then:
\begin{eqnarray}
\|u(\cdot,t)\|^2_{\mathbb{H}^4}+\|\hat{u}(\cdot,t)\|^2_{\mathbb{H}^4}\leq ce^{-\mu t}\left(\|u_0\|^2_{\mathbb{H}^4}+\|\hat{u}_0\|^2_{\mathbb{H}^4}\right)
\end{eqnarray}
\end{theorem}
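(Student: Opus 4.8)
The plan is to transplant the two-part strategy of Theorem~1 --- backstepping to a cascade target system, then a strict Lyapunov function --- to the $\mathbb{H}^4$ setting, with the nonlinearities treated as higher-order perturbations that a small-data continuity argument renders harmless. \emph{Step 1: transformation.} Apply the \emph{same} Volterra transformations \eqref{trans}--\eqref{errtrans}, with the \emph{same} kernels $k,p$ of \eqref{ker1}--\eqref{obsker3}, to $\hat{u}$ and $\tilde{u}$; the control law \eqref{cont} is unchanged. Since \eqref{Ais} and \eqref{errnonlin1} agree with the linear plant and observer of Section~III up to the extra terms $f(\hat{u})$ and $g(u,\hat{u})$, the images $\hat{\gamma},\tilde{\gamma}$ satisfy the target system \eqref{maint1}--\eqref{lol3} augmented by nonlinear forcing,
\begin{eqnarray}
\hat{\gamma}_t &=& \hat{\gamma}_{xx}-\bar{p}_1(x)\tilde{\gamma}_x(1,t)+F[\hat{u}]\\
\tilde{\gamma}_t &=& \tilde{\gamma}_{xx}+G[u,\hat{u}]
\end{eqnarray}
with the homogeneous Dirichlet conditions \eqref{maint3},\eqref{lol3} unchanged, where $F[\hat{u}](x,t)=f(\hat{u}(x,t))-\int_0^x\! k(x,y)f(\hat{u}(y,t))\,\mathrm{d}y$ and $G$ is built from $g$ in the same way. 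By the Remark (and the mean value theorem for $g$), $F$ and $G$ --- and, after differentiating the equations, their first two time derivatives --- are quadratically small in the solution.

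\emph{Step 2: reduction of the $\mathbb{H}^4$ norm.} Following the device of \cite{Coron1,Coron2}, differentiate the target equations once and twice in $t$; this is legitimate because $f\in\mathbb{C}^2$, and because the boundary data of $\hat{\gamma},\tilde{\gamma}$ are constant in $t$ each $\partial_t^j\hat{\gamma},\partial_t^j\tilde{\gamma}$ ($j=0,1,2$) again solves a parabolic equation of the same structure with homogeneous Dirichlet data. Using these equations to trade time derivatives for spatial ones ($\partial_t^{j+1}$ equals $\partial_{xx}\partial_t^j$ up to lower-order and quadratically small terms), a bootstrap shows that, \emph{locally} in a neighbourhood of the origin, the quantity
\begin{eqnarray}
W(t)=\sum_{j=0}^{2}\left(a_j\|\partial_t^j\hat{\gamma}(\cdot,t)\|_{\mathbb{L}^2}^2+b_j\|\partial_t^j\tilde{\gamma}(\cdot,t)\|_{\mathbb{L}^2}^2\right)
\end{eqnarray}
(with the lowest-order levels taken in $\mathbb{H}^1$ so that boundary traces are controlled) is equivalent to $\|\hat{\gamma}(\cdot,t)\|_{\mathbb{H}^4}^2+\|\tilde{\gamma}(\cdot,t)\|_{\mathbb{H}^4}^2$, hence --- via the invertibility of \eqref{trans}--\eqref{errtrans} with inverses \eqref{inv}--\eqref{errinv} --- to $\|u(\cdot,t)\|_{\mathbb{H}^4}^2+\|\hat{u}(\cdot,t)\|_{\mathbb{H}^4}^2$. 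It therefore suffices to prove that $W$ decays exponentially, so $W$ is the strict Lyapunov function candidate.

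\emph{Step 3: Lyapunov estimate and conclusion.} Differentiate $W$ along trajectories. At each level $j$ the linear part repeats the computation behind \eqref{Vdot}: integration by parts yields the dissipation $-\|\partial_x\partial_t^j\hat{\gamma}\|_{\mathbb{L}^2}^2$ and $-\|\partial_x\partial_t^j\tilde{\gamma}\|_{\mathbb{L}^2}^2$, while the coupling term contributes $-(\int_0^1\!\partial_t^j\hat{\gamma}\,\bar{p}_1\,\mathrm{d}x)\,\partial_x\partial_t^j\tilde{\gamma}(1,t)$, absorbed into those dissipation terms by Young's inequality after a trace estimate for $\partial_x\partial_t^j\tilde{\gamma}(1,t)$; choosing the weights in the cascade order --- the $b_j$ large relative to the $a_j$, exactly as $A=B^2$ in Theorem~1, and $a_2\gg a_1\gg a_0$ --- makes the total linear contribution $\leq-c\sum_j(\|\partial_x\partial_t^j\hat{\gamma}\|_{\mathbb{L}^2}^2+\|\partial_x\partial_t^j\tilde{\gamma}\|_{\mathbb{L}^2}^2)$, which by Poincar\'e's inequality dominates $-\mu_0 W$ for a $\mu_0$ that can be made as large as we please by scaling the weights. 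The nonlinear contributions are integrals containing $F,G,\partial_tF,\partial_tG,\partial_t^2F,\partial_t^2G$; estimating them with the bounds of the Remark and the one-dimensional embedding $\mathbb{H}^1\hookrightarrow\mathbb{L}^\infty$ gives a total bounded by $C\,W^{1/2}\cdot W$. Hence $\dot W\leq-\mu_0 W+CW^{3/2}$, and a local well-posedness result in $\mathbb{H}^4$ (valid under the compatibility conditions implicit in $u_0,\hat{u}_0\in\mathbb{H}^4$) together with a continuity argument shows that if $\|u_0\|_{\mathbb{H}^4}+\|\hat{u}_0\|_{\mathbb{H}^4}\leq\delta$ with $\delta$ small enough then $W$ stays in the regime $CW^{1/2}\leq\mu_0/2$, so $\dot W\leq-\tfrac{\mu_0}{2}W$ and $W(t)\leq W(0)e^{-\mu_0 t/2}$. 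Undoing the norm equivalences of Step~2 --- and taking $\mu_0\geq2\mu$ --- yields the stated estimate, with $c$ and $\delta$ depending on the prescribed $\mu$.

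The step I expect to be the main obstacle is Step~3 at the highest differentiation level: controlling the boundary trace $\partial_x\partial_t^2\tilde{\gamma}(1,t)$ produced by the coupling term (this is precisely what forces the analysis into $\mathbb{H}^4$ rather than $\mathbb{L}^2$, and it may require adding weighted or boundary correction terms to $W$, or invoking finer parabolic smoothing estimates), together with the two-time-derivative nonlinear residuals $\partial_t^2F,\partial_t^2G$, which through $f''$ contain products such as $(\hat{u}_t)^2$, $\hat{u}_t\hat{u}_{xt}$ and $\hat{u}_{tt}$ that must be placed in $\mathbb{L}^2\times\mathbb{L}^\infty$ using exactly the regularity that $W$ encodes. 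A secondary technical point is making the local $\mathbb{H}^4$ well-posedness and the required compatibility conditions precise, which the statement leaves implicit.
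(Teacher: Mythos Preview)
Your proposal is correct and follows essentially the same route as the paper: the identical backstepping transformations yield the perturbed cascade target system, the $\mathbb{H}^4$ norm is exchanged for the $\mathbb{L}^2$ norms of $\partial_t^j\hat{\gamma},\partial_t^j\tilde{\gamma}$ ($j=0,1,2$) via the equations (the paper formalizes this as Lemmas~1--2), and the composite Lyapunov functional (the paper's $S=U+V+W$, your $W$) satisfies $\dot S\leq -\mu S+CS^{3/2}$, from which the small-data exponential decay follows. Two minor remarks: the paper uses the \emph{same} weight pair $(1,A)$ at every level and simply sums, so your hierarchy $a_2\gg a_1\gg a_0$ is unnecessary (all cross-level nonlinear terms are already $O(S^{3/2})$); and your claim that scaling the weights makes $\mu_0$ arbitrarily large is not right---the base rate is fixed by Poincar\'e and scales out---though the paper's own proof shares this limitation despite the ``for every $\mu>0$'' phrasing of the statement.
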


\section{PROOF OF THEOREM 2}

The proof of the Theorem 2 is based on output feedback stabilization and construction of a strict Lyapunov function developed in \cite{Vaz} and \cite{Coron1} for quasilinear hyperbolic systems.

\subsection{Preliminary Definition}

For $\gamma(x,t)\in\mathbb{R}$, we define:
\begin{eqnarray}
\|\gamma\|_{\infty} &=& \sup_{x\in[0,1]}|\gamma(x,t)|,\\
\|\gamma\|_{\mathbb{L}^1} &=& \int_0^1\; |\gamma(x,t)| \,\mathrm{d}x.
\end{eqnarray}
To simplify our notation, we denote $|\gamma|=|\gamma(x,t)|$ and $\|\gamma\|=\|\gamma(\cdot,t)\|$. For $\gamma\in\mathbb{H}^4([0,1])$, recall the following well-known inequalities:
\begin{eqnarray}
\|\gamma\|_{\mathbb{L}^1} &\leq& K_1\|\gamma\|_{\mathbb{L}^2} \leq K_2\|\gamma\|_{\infty},\label{ineq1}\\
\|\gamma\|_{\infty} &\leq& K_3\left(\|\gamma\|_{\mathbb{L}^2}+\|\gamma_x\|_{\mathbb{L}^2}\right) \leq K_4\|\gamma\|_{\mathbb{H}^1},\label{ineq2}\\
\|\gamma_x\|_{\infty} &\leq& K_5\left(\|\gamma_x\|_{\mathbb{L}^2}+\|\gamma_{xx}\|_{\mathbb{L}^2}\right) \leq K_6\|\gamma\|_{\mathbb{H}^2},\label{ineq3}\\
\|\gamma_{xx}\|_{\infty} &\leq& K_7\left(\|\gamma_{xx}\|_{\mathbb{L}^2}+\|\gamma_{xxx}\|_{\mathbb{L}^2}\right) \leq K_8\|\gamma\|_{\mathbb{H}^3},\label{ineq4}\\
\|\gamma_{xxx}\|_{\infty} &\leq& K_9\left(\|\gamma_{xxx}\|_{\mathbb{L}^2}+\|\gamma_{xxxx}\|_{\mathbb{L}^2}\right) \leq K_{10}\|\gamma\|_{\mathbb{H}^4}.\label{ineq5}
\end{eqnarray}
The following functionals are used to simplify the presentation in the upcoming sections.
\begin{eqnarray}
\mathcal{K}[\hat{\gamma}](x) &=& \hat{\gamma}(x,t) -\int_0^x\! k(x,y)\hat{\gamma}(y,t)\,\mathrm{d}y,\\
\mathcal{L}[\hat{\gamma}](x) &=& \hat{\gamma}(x,t) + \int_0^x\! l(x,y)\hat{\gamma}(y,t)\,\mathrm{d}y,\\
\mathcal{R}[\tilde{\gamma}](x) &=& \tilde{\gamma}(x,t) + \int_x^1\! r(x,y)\tilde{\gamma}(y,t)\,\mathrm{d}y,\\
\mathcal{P}[\tilde{\gamma}](x) &=& \tilde{\gamma}(x,t) - \int_x^1\! p(x,y)\tilde{\gamma}(y,t)\,\mathrm{d}y.
\end{eqnarray}
Since the control direct \eqref{trans} and \eqref{errtrans}, and the inverse kernels \eqref{inv} and \eqref{errinv}, are $\mathbb{C}^2(\mathcal{T})$, these functionals satisfy the following bounds:
\begin{eqnarray}
\left|\mathcal{K}[\hat{\gamma}]\right| &\leq& K_1\left(|\hat{\gamma}|+\|\hat{\gamma}\|_{\mathbb{L}^1}\right),\\
\left|\mathcal{L}[\hat{\gamma}]\right| &\leq& K_2\left(|\hat{\gamma}|+\|\hat{\gamma}\|_{\mathbb{L}^1}\right),\\
\left|\mathcal{R}[\tilde{\gamma}]\right| &\leq& K_3\left(|\tilde{\gamma}|+\|\tilde{\gamma}\|_{\mathbb{L}^1}\right),\\
\left|\mathcal{P}[\tilde{\gamma}]\right| &\leq& K_4\left(|\tilde{\gamma}|+\|\tilde{\gamma}\|_{\mathbb{L}^1}\right).
\end{eqnarray}

It can be proven that the transformations \eqref{trans}, \eqref{errtrans}, \eqref{inv}, and \eqref{errinv} map system:
\begin{eqnarray}
\hat{\gamma}_t(x,t) &=& \hat{\gamma}_{xx}(x,t)+F[\hat{\gamma}]-\bar{p}_1(x)\tilde{\gamma}_x(1,t),\label{er1}\\
\hat{\gamma}(0,t) &=& 0,\\
\hat{\gamma}(1,t) &=& 0,\label{er3}\\
\tilde{\gamma}_t(x,t) &=& \tilde{\gamma}_{xx}(x,t)+G[\hat{\gamma},\tilde{\gamma}],\label{li1}\\
\tilde{\gamma}(0,t) &=& 0,\\
\tilde{\gamma}(1,t) &=& 0,\label{li3}
\end{eqnarray}
into \eqref{Ais}-\eqref{obsmain3} and \eqref{errnonlin1}-\eqref{errnonlin3}, where:
\begin{eqnarray}
F[\hat{\gamma}] &=& \mathcal{K}[f(\mathcal{L}[\hat{\gamma}])],\\
G[\hat{\gamma},\tilde{\gamma}] &=& \mathcal{R}[g(\mathcal{L}[\hat{\gamma}]),g(\mathcal{P}[\tilde{\gamma}])].
\end{eqnarray}
The above functionals satisfy the following bounds:
\begin{eqnarray}
\left|F[\hat{\gamma}]\right| &\leq& K_1\left(|\hat{\gamma}|^2+\|\hat{\gamma}\|_{\mathbb{L}^2}^2\right),\\
\left|G[\hat{\gamma},\tilde{\gamma}]\right| &\leq& K_2\left(|\hat{\gamma}|^2+\|\hat{\gamma}\|_{\mathbb{L}^2}^2\right)+K_3\left(|\tilde{\gamma}|^2+\|\tilde{\gamma}\|_{\mathbb{L}^2}^2\right).
\end{eqnarray}

The $\mathbb{H}^4$ local stability for $\gamma$ and $\hat{\gamma}$ are proved by relating the growth of $\|\hat{\gamma}\|_{\mathbb{L}^2}+\|\tilde{\gamma}\|_{\mathbb{L}^2}$, $\|\hat{\gamma}_t\|_{\mathbb{L}^2}+\|\tilde{\gamma}_t\|_{\mathbb{L}^2}$, and $\|\hat{\gamma}_{tt}\|_{\mathbb{L}^2}+\|\tilde{\gamma}_{tt}\|_{\mathbb{L}^2}$ with $\|\hat{\gamma}\|_{\mathbb{H}^4}+\|\tilde{\gamma}\|_{\mathbb{H}^4}$. The relations of these norms are given in the following lemmas.

\begin{lemma}
There exists $\delta>0$ such that for $\|\hat{\gamma}\|_{\infty}+\|\tilde{\gamma}\|_{\infty}<0$, then the norm defined by $\|\hat{\gamma}_t\|_{\mathbb{L}^2}+\|\tilde{\gamma}_t\|_{\mathbb{L}^2}+\|\hat{\gamma}\|_{\mathbb{L}^2}+\|\tilde{\gamma}\|_{\mathbb{L}^2}$ is equivalent to $\|\hat{\gamma}\|_{\mathbb{H}^2}+\|\tilde{\gamma}\|_{\mathbb{H}^2}$.
\end{lemma}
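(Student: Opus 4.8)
The plan is to establish the claimed norm equivalence in both directions, using the PDEs \eqref{er1}--\eqref{li3} to trade time derivatives for spatial derivatives. One direction is essentially an elliptic regularity estimate; the other direction is a direct substitution. Throughout I will work under the smallness assumption $\|\hat{\gamma}\|_{\infty}+\|\tilde{\gamma}\|_{\infty}<\delta$ (the statement's "$<0$" is evidently a typo for "$<\delta$"), which is exactly what is needed to keep the nonlinear remainder terms $F[\hat{\gamma}]$ and $G[\hat{\gamma},\tilde{\gamma}]$ subordinate to the linear part via the bounds recorded just before the lemma.

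First I would prove the bound $\|\hat{\gamma}\|_{\mathbb{H}^2}+\|\tilde{\gamma}\|_{\mathbb{H}^2}\leq C\big(\|\hat{\gamma}_t\|_{\mathbb{L}^2}+\|\tilde{\gamma}_t\|_{\mathbb{L}^2}+\|\hat{\gamma}\|_{\mathbb{L}^2}+\|\tilde{\gamma}\|_{\mathbb{L}^2}\big)$. Starting from \eqref{li1}, write $\tilde{\gamma}_{xx}=\tilde{\gamma}_t-G[\hat{\gamma},\tilde{\gamma}]$; since $\tilde{\gamma}$ satisfies homogeneous Dirichlet conditions \eqref{li3}, elliptic regularity for $-\partial_{xx}$ on $[0,1]$ gives $\|\tilde{\gamma}\|_{\mathbb{H}^2}\leq C\|\tilde{\gamma}_{xx}\|_{\mathbb{L}^2}\leq C\big(\|\tilde{\gamma}_t\|_{\mathbb{L}^2}+\|G[\hat{\gamma},\tilde{\gamma}]\|_{\mathbb{L}^2}\big)$. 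The nonlinear term is controlled by $\|G[\hat{\gamma},\tilde{\gamma}]\|_{\mathbb{L}^2}\leq K(\|\hat{\gamma}\|_{\infty}\|\hat{\gamma}\|_{\mathbb{L}^2}+\|\tilde{\gamma}\|_{\infty}\|\tilde{\gamma}\|_{\mathbb{L}^2})\leq K\delta(\|\hat{\gamma}\|_{\mathbb{L}^2}+\|\tilde{\gamma}\|_{\mathbb{L}^2})$, which is absorbed into the right-hand side. The same argument applied to \eqref{er1} gives $\|\hat{\gamma}\|_{\mathbb{H}^2}\leq C\big(\|\hat{\gamma}_t\|_{\mathbb{L}^2}+\|F[\hat{\gamma}]\|_{\mathbb{L}^2}+|\bar p_1(x)\tilde\gamma_x(1,t)|\big)$; here the extra boundary-flux term $\bar p_1(x)\tilde\gamma_x(1,t)$ is the one genuinely new feature, and it is bounded by $B\|\tilde\gamma_x\|_{\infty}\leq BK_5(\|\tilde\gamma_x\|_{\mathbb{L}^2}+\|\tilde\gamma_{xx}\|_{\mathbb{L}^2})$ using \eqref{ineq3}. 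Since we have already bounded $\|\tilde\gamma\|_{\mathbb{H}^2}$ in terms of the right-hand-side quantities, this closes up — provided the constants are such that the $\|\tilde\gamma_{xx}\|_{\mathbb{L}^2}$ contribution does not feed back destructively, which it does not because the $\hat\gamma$ estimate does not re-enter the $\tilde\gamma$ estimate (the coupling in \eqref{er1}--\eqref{li1} is one-directional, $\tilde\gamma\to\hat\gamma$).

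For the reverse inequality, $\|\hat{\gamma}_t\|_{\mathbb{L}^2}+\|\tilde{\gamma}_t\|_{\mathbb{L}^2}+\|\hat{\gamma}\|_{\mathbb{L}^2}+\|\tilde{\gamma}\|_{\mathbb{L}^2}\leq C\big(\|\hat{\gamma}\|_{\mathbb{H}^2}+\|\tilde{\gamma}\|_{\mathbb{H}^2}\big)$, one simply substitutes the PDEs back in: $\|\tilde\gamma_t\|_{\mathbb{L}^2}\leq\|\tilde\gamma_{xx}\|_{\mathbb{L}^2}+\|G[\hat\gamma,\tilde\gamma]\|_{\mathbb{L}^2}\leq\|\tilde\gamma\|_{\mathbb{H}^2}+K\delta(\|\hat\gamma\|_{\mathbb{L}^2}+\|\tilde\gamma\|_{\mathbb{L}^2})$, and likewise $\|\hat\gamma_t\|_{\mathbb{L}^2}\leq\|\hat\gamma_{xx}\|_{\mathbb{L}^2}+\|F[\hat\gamma]\|_{\mathbb{L}^2}+B\|\tilde\gamma_x\|_{\infty}\leq\|\hat\gamma\|_{\mathbb{H}^2}+K\delta\|\hat\gamma\|_{\mathbb{L}^2}+BK_5\|\tilde\gamma\|_{\mathbb{H}^2}$, again using \eqref{ineq3} for the trace term. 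The $\|\hat\gamma\|_{\mathbb{L}^2}+\|\tilde\gamma\|_{\mathbb{L}^2}$ pieces are trivially dominated by the $\mathbb{H}^2$ norms. Collecting both directions yields the equivalence.

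The main obstacle is the boundary-flux coupling term $\bar p_1(x)\tilde\gamma_x(1,t)$ in \eqref{er1}: unlike the interior nonlinear terms it is not small, so it must be handled by the trace/interpolation inequality \eqref{ineq3} rather than absorbed, and one must check that the resulting dependence of $\|\hat\gamma\|_{\mathbb{H}^2}$ on $\|\tilde\gamma\|_{\mathbb{H}^2}$ is consistent (no circularity) — which holds precisely because the observer-error system \eqref{li1}--\eqref{li3} is autonomous in $\tilde\gamma$. A secondary technical point is justifying the elliptic estimate $\|w\|_{\mathbb{H}^2}\leq C\|w_{xx}\|_{\mathbb{L}^2}$ for $w$ with homogeneous Dirichlet data, which on the interval is elementary (integrate twice, use Poincaré), so I would state it without belaboring it.
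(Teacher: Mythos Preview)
Your approach is correct and is precisely what the paper's one-sentence proof (``bounding the norms in \eqref{er1} and \eqref{li1} for small $\|\hat{\gamma}\|_{\infty}+\|\tilde{\gamma}\|_{\infty}$'') is gesturing at; you have simply supplied the details the paper omits. One small slip: the $\tilde\gamma$ system \eqref{li1} is not literally autonomous in $\tilde\gamma$, since $G$ depends on $\hat\gamma$ as well, but this does not damage your argument because that dependence enters only through the small nonlinear term and hence only via $\|\hat\gamma\|_{\mathbb{L}^2}$, which is already one of the right-hand-side quantities---so there is still no circularity when you bound $\|\tilde\gamma\|_{\mathbb{H}^2}$ first and then use it to control the boundary-flux term in the $\hat\gamma$ estimate.
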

\begin{proof}
The lemma is proven by bounding the norms in \eqref{er1} and \eqref{li1} for small $\|\hat{\gamma}\|_{\infty}+\|\tilde{\gamma}\|_{\infty}$.
\end{proof}
\begin{lemma}
There exists $\delta>0$ such that for $\|\hat{\gamma}\|_{\infty}+\|\tilde{\gamma}\|_{\infty}+\|\hat{\gamma}_t\|_{\infty}+\|\tilde{\gamma}_t\|_{\infty}<0$, then the norm defined by $\|\hat{\gamma}_{tt}\|_{\mathbb{L}^2}+\|\tilde{\gamma}_{tt}\|_{\mathbb{L}^2}+\|\hat{\gamma}_t\|_{\mathbb{L}^2}+\|\tilde{\gamma}_t\|_{\mathbb{L}^2}+\|\hat{\gamma}\|_{\mathbb{L}^2}+\|\tilde{\gamma}\|_{\mathbb{L}^2}$ is equivalent to $\|\hat{\gamma}\|_{\mathbb{H}^4}+\|\tilde{\gamma}\|_{\mathbb{H}^4}$.
\end{lemma}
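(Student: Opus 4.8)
The plan is to exploit parabolic regularity: for the heat operator a time derivative measured in $\mathbb{L}^2$ is worth two spatial derivatives, so two $\mathbb{L}^2$ time derivatives of $(\hat\gamma,\tilde\gamma)$ should control four spatial derivatives. I would establish the equivalence by applying Lemma 1 twice and then trading the top-order spatial derivatives for time derivatives through the evolution equations \eqref{er1} and \eqref{li1}. Lemma 1 applied directly to \eqref{er1}--\eqref{li3} already gives that $\|\hat\gamma_t\|_{\mathbb{L}^2}+\|\tilde\gamma_t\|_{\mathbb{L}^2}+\|\hat\gamma\|_{\mathbb{L}^2}+\|\tilde\gamma\|_{\mathbb{L}^2}$ is equivalent to $\|\hat\gamma\|_{\mathbb{H}^2}+\|\tilde\gamma\|_{\mathbb{H}^2}$. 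Differentiating \eqref{er1}--\eqref{li3} once in $t$, and noting that the Dirichlet data are time-independent, the pair $(\hat\gamma_t,\tilde\gamma_t)$ solves a system of exactly the same structure (heat principal part, homogeneous boundary conditions) with source terms $\partial_tF[\hat\gamma]=\mathcal{K}[f'(\mathcal{L}[\hat\gamma])\,\mathcal{L}[\hat\gamma_t]]$ and $\partial_tG[\hat\gamma,\tilde\gamma]$ and boundary coupling $-\bar p_1(x)\tilde\gamma_{tx}(1,t)$. A second application of Lemma 1, now to this differentiated system, then gives that $\|\hat\gamma_{tt}\|_{\mathbb{L}^2}+\|\tilde\gamma_{tt}\|_{\mathbb{L}^2}+\|\hat\gamma_t\|_{\mathbb{L}^2}+\|\tilde\gamma_t\|_{\mathbb{L}^2}$ is equivalent to $\|\hat\gamma_t\|_{\mathbb{H}^2}+\|\tilde\gamma_t\|_{\mathbb{H}^2}$.

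Before using these I would record that, under the hypothesis $\|\hat\gamma\|_{\infty}+\|\tilde\gamma\|_{\infty}+\|\hat\gamma_t\|_{\infty}+\|\tilde\gamma_t\|_{\infty}<\delta$, Lemma 1 together with the embeddings \eqref{ineq2}--\eqref{ineq3} makes $\|\hat\gamma\|_{\mathbb{H}^2}+\|\tilde\gamma\|_{\mathbb{H}^2}$, and hence also $\|\hat\gamma\|_{\infty}+\|\tilde\gamma\|_{\infty}+\|\hat\gamma_x\|_{\infty}+\|\tilde\gamma_x\|_{\infty}$, small with $\delta$; this is what will make the nonlinear remainders absorbable. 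For the inequality $\|\hat\gamma\|_{\mathbb{H}^4}+\|\tilde\gamma\|_{\mathbb{H}^4}\lesssim(\text{time-derivative norm})$ I would read \eqref{er1} as the elliptic identity $\hat\gamma_{xx}=\hat\gamma_t-F[\hat\gamma]+\bar p_1(x)\tilde\gamma_x(1,t)$ and differentiate it once and twice in $x$: this expresses $\hat\gamma_{xxx}$ and $\hat\gamma_{xxxx}$ through $\hat\gamma_{tx},\hat\gamma_{txx}$ (bounded by $\|\hat\gamma_t\|_{\mathbb{H}^2}$, hence by the time-derivative norm via the second application of Lemma 1), through $\bar p_1'(x)\tilde\gamma_x(1,t)$ and $\bar p_1''(x)\tilde\gamma_x(1,t)$ (smooth coefficients times the trace $\tilde\gamma_x(1,t)$, which by \eqref{ineq3} is $\lesssim\|\tilde\gamma\|_{\mathbb{H}^2}$ and hence again controlled through Lemma 1), and through $\partial_xF[\hat\gamma]$ and $\partial_x^2F[\hat\gamma]$. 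By the chain rule, the Remark bounds on $f,f',f''$, and the smallness just noted, these last terms are dominated by $\varepsilon\,(\|\hat\gamma\|_{\mathbb{H}^2}+\|\tilde\gamma\|_{\mathbb{H}^2})$ with $\varepsilon\to0$ as $\delta\to0$. Together with the trivial bounds on $\|\hat\gamma\|_{\mathbb{L}^2}$, $\|\hat\gamma_x\|_{\mathbb{L}^2}$ and the $\|\hat\gamma_{xx}\|_{\mathbb{L}^2}$ bound from Lemma 1, this gives the $\mathbb{H}^4$ estimate for $\hat\gamma$; the same computation with \eqref{li1} and $G$ in place of $F$ handles $\tilde\gamma$.

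The reverse inequality is the easy direction: $\|\hat\gamma\|_{\mathbb{L}^2}$ and $\|\hat\gamma_t\|_{\mathbb{L}^2}$ are trivially $\lesssim\|\hat\gamma\|_{\mathbb{H}^4}$, while differentiating \eqref{er1} in $t$ and substituting $\hat\gamma_t=\hat\gamma_{xx}+F[\hat\gamma]-\bar p_1(x)\tilde\gamma_x(1,t)$ writes $\hat\gamma_{tt}$ as $\hat\gamma_{xxxx}$ plus the nonlinear terms $\partial_x^2F[\hat\gamma]$, $\partial_tF[\hat\gamma]$ and the traces $\bar p_1''(x)\tilde\gamma_x(1,t)$, $\bar p_1(x)\tilde\gamma_{tx}(1,t)$; the traces are bounded by $\|\tilde\gamma\|_{\mathbb{H}^2}$ and $\|\tilde\gamma_t\|_{\mathbb{H}^2}$, the latter $\lesssim\|\tilde\gamma\|_{\mathbb{H}^4}$ after substituting \eqref{li1}, and the nonlinear terms are again $\lesssim\varepsilon\,(\|\hat\gamma\|_{\mathbb{H}^4}+\|\tilde\gamma\|_{\mathbb{H}^4})$. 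Collecting the two chains of estimates and fixing $\delta$ small enough for every absorption to go through proves the equivalence.

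The step I expect to be the main obstacle is the second application of Lemma 1, namely checking that the time-differentiated system genuinely lies within its scope: that $\partial_tF[\hat\gamma]$ and $\partial_tG[\hat\gamma,\tilde\gamma]$ obey bounds of the same form as $F$ and $G$ but now in $(\hat\gamma_t,\tilde\gamma_t)$ and with a prefactor made small by the sup-norm smallness of $(\hat\gamma,\tilde\gamma)$, and that the differentiated boundary coupling $-\bar p_1(x)\tilde\gamma_{tx}(1,t)$ enters the strict-Lyapunov estimate of Lemma 1 exactly as the original linear coupling $-\bar p_1(x)\tilde\gamma_x(1,t)$ did. Closely related is the need to bound the traces $\tilde\gamma_x(1,t)$ and $\tilde\gamma_{tx}(1,t)$ without circularity; this succeeds only because each trace is dominated, via \eqref{ineq3}, by a strictly lower-order norm that Lemma 1 already returns, so a single final choice of $\delta$ closes all of the estimates.
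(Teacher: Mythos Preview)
Your proposal is correct and follows essentially the same route as the paper: the paper's proof is the single sentence ``Taking two $x$-derivatives in \eqref{er1} and \eqref{li1}, bounding these variables and using Lemma~1, we conclude the proof,'' and your argument is precisely a fleshed-out version of this---you differentiate the elliptic identity $\hat\gamma_{xx}=\hat\gamma_t-F[\hat\gamma]+\bar p_1(x)\tilde\gamma_x(1,t)$ twice in $x$, bound the nonlinear and trace remainders using the Remark and the embeddings, and invoke Lemma~1 (both directly and on the time-differentiated system) to close the estimates. Your explicit identification of the two potential obstacles (that the time-differentiated nonlinearities obey bounds of the same shape, and that the traces $\tilde\gamma_x(1,t)$, $\tilde\gamma_{tx}(1,t)$ are controlled by strictly lower-order norms so no circularity arises) is exactly the content hidden behind the paper's word ``bounding.''
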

\begin{proof}
Takin two $x$-derivatives in \eqref{er1} and \eqref{li1}, bounding these variables and using lemma 1, we conclude the proof.
\end{proof}

\subsection{Analyzing the Growth of $\|\hat{\gamma}\|_{\mathbb{L}^2}+\|\tilde{\gamma}\|_{\mathbb{L}^2}$}

The first derivative of \eqref{U} with respect to $t$ along \eqref{er1}-\eqref{li3} is given by:
\begin{eqnarray}
\dot{U}(t) &=& -\int_0^1\! \hat{\gamma}_x(x,t)^2 \,\mathrm{d}x -A\int_0^1\! \tilde{\gamma}_x(x,t)^2 \,\mathrm{d}x \nonumber\\
&&- \int_0^1\! \hat{\gamma}(x,t)\bar{p}_1(x)\tilde{\gamma}_x(1,t) \,\mathrm{d}x\label{Vdotn}\\
&&+\int_0^1\! \hat{\gamma}(x,t)F[\hat{\gamma}] \,\mathrm{d}x+\int_0^1\! \tilde{\gamma}(x,t)G[\hat{\gamma},\tilde{\gamma}] \,\mathrm{d}x.\nonumber
\end{eqnarray}
The last terms of the right hand side of \eqref{Vdotn} are estimated as follow:
\begin{eqnarray}
\int_0^1\! |\hat{\gamma}||F[\hat{\gamma}]| \,\mathrm{d}x&\leq& K_1\|\hat{\gamma}\|_{\infty}\|\hat{\gamma}\|^2_{\mathbb{L}^2},\\
\int_0^1\! |\tilde{\gamma}||G[\hat{\gamma},\tilde{\gamma}]| \,\mathrm{d}x&\leq& K_2\|\tilde{\gamma}\|_{\infty}\left(\|\hat{\gamma}\|^2_{\mathbb{L}^2}+\|\tilde{\gamma}\|^2_{\mathbb{L}^2}\right).
\end{eqnarray}
Hence, we have:
\begin{eqnarray}
&&\int_0^1\! |\hat{\gamma}||F[\hat{\gamma}]| \,\mathrm{d}x+\int_0^1\! |\tilde{\gamma}||G[\hat{\gamma},\tilde{\gamma}]| \,\mathrm{d}x\leq\nonumber\\
&& K_3\left(\|\hat{\gamma}\|_{\infty}+\|\tilde{\gamma}\|_{\infty}\right)U
\end{eqnarray}
Applying \eqref{ineq2}, we have:
\begin{eqnarray}
&&\int_0^1\! |\hat{\gamma}||F[\hat{\gamma}]| + |\tilde{\gamma}||G[\hat{\gamma},\tilde{\gamma}]| \,\mathrm{d}x\leq \nonumber\\
&&K_4\left(\|\hat{\gamma}_x\|_{\infty}+\|\tilde{\gamma}_x\|_{\infty}\right)U+K_5U^{3/2}
\end{eqnarray}
Thus, we have the following result.

\begin{theorem}
There exists $\delta_1$ such that if $\|\gamma\|_{\infty}<\delta_1$ then:
\begin{eqnarray}
\dot{U} &\leq& -\mu_1U + C_1\left(\|\hat{\gamma}_x\|_{\infty}+\|\tilde{\gamma}_x\|_{\infty}\right)U+ C_2 U^{3/2},
\end{eqnarray}
where $\mu_1$, $C_1$, and $C_2$ are positive constants.
\end{theorem}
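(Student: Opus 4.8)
The plan is to superpose the linear computation from the proof of Theorem~1 with the quadratic estimates on the functionals $F$ and $G$ established just above. The point of departure is the identity \eqref{Vdotn} for $\dot U$ along \eqref{er1}--\eqref{li3}: its first three terms are exactly the right-hand side of \eqref{Vdot}, while the remaining two are the nonlinear contributions $\int_0^1\hat\gamma(x,t)F[\hat\gamma]\,\mathrm{d}x$ and $\int_0^1\tilde\gamma(x,t)G[\hat\gamma,\tilde\gamma]\,\mathrm{d}x$. I would therefore estimate the linear part and the nonlinear part separately and add them.

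For the linear part nothing new is required: repeating the argument in the proof of Theorem~1 --- Young's inequality applied to the boundary term $-\int_0^1\hat\gamma\bar p_1(x)\tilde\gamma_x(1,t)\,\mathrm{d}x$ with $B=\max_{x\in[0,1]}\bar p_1(x)$, the choice $A=B^2$, and Poincar\'e's inequality --- yields $-\int_0^1\hat\gamma_x^2\,\mathrm{d}x-A\int_0^1\tilde\gamma_x^2\,\mathrm{d}x-\int_0^1\hat\gamma\bar p_1(x)\tilde\gamma_x(1,t)\,\mathrm{d}x\le-\mu_1 U$ for a positive constant $\mu_1$ (one may take $\mu_1=\tfrac14$; any positive value works, which is ultimately what lets $\mu$ in Theorem~2 be arbitrary). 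For the nonlinear part, I would first pick $\delta_1>0$ small enough that $\|\hat\gamma\|_\infty+\|\tilde\gamma\|_\infty<\delta_1$ forces $\mathcal{L}[\hat\gamma]$ and $\mathcal{P}[\tilde\gamma]$ --- hence $\hat u$ and $\tilde u$ --- to remain in the region $|\cdot|\le\delta_f$ where the local bounds on $f$ and $g$ hold; this uses the uniform $\mathbb{C}^2(\mathcal{T})$ bounds on the direct and inverse kernels together with \eqref{ineq1}. On that region the bounds $|F[\hat\gamma]|\le K_1(|\hat\gamma|^2+\|\hat\gamma\|_{\mathbb{L}^2}^2)$ and $|G[\hat\gamma,\tilde\gamma]|\le K_2(|\hat\gamma|^2+\|\hat\gamma\|_{\mathbb{L}^2}^2)+K_3(|\tilde\gamma|^2+\|\tilde\gamma\|_{\mathbb{L}^2}^2)$ are available; multiplying by $|\hat\gamma|$, resp.\ $|\tilde\gamma|$, and integrating gives the combined estimate $\int_0^1|\hat\gamma||F[\hat\gamma]|+|\tilde\gamma||G[\hat\gamma,\tilde\gamma]|\,\mathrm{d}x\le \bar K(\|\hat\gamma\|_\infty+\|\tilde\gamma\|_\infty)U$.

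The final step is to split this cubic quantity into the two advertised terms. By Agmon's inequality \eqref{ineq2}, $\|\hat\gamma\|_\infty+\|\tilde\gamma\|_\infty\le K(\|\hat\gamma\|_{\mathbb{L}^2}+\|\hat\gamma_x\|_{\mathbb{L}^2}+\|\tilde\gamma\|_{\mathbb{L}^2}+\|\tilde\gamma_x\|_{\mathbb{L}^2})$; bounding $\|\hat\gamma_x\|_{\mathbb{L}^2}\le\|\hat\gamma_x\|_\infty$ and $\|\tilde\gamma_x\|_{\mathbb{L}^2}\le\|\tilde\gamma_x\|_\infty$ on $[0,1]$, and using $\|\hat\gamma\|_{\mathbb{L}^2}^2+A\|\tilde\gamma\|_{\mathbb{L}^2}^2=2U$ to get $\|\hat\gamma\|_{\mathbb{L}^2}+\|\tilde\gamma\|_{\mathbb{L}^2}\le K' U^{1/2}$, one obtains $\bar K(\|\hat\gamma\|_\infty+\|\tilde\gamma\|_\infty)U\le C_1(\|\hat\gamma_x\|_\infty+\|\tilde\gamma_x\|_\infty)U+C_2U^{3/2}$. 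Adding this to the linear estimate proves the claim. The only real care-point --- more bookkeeping than obstacle --- is the choice of $\delta_1$: since the Volterra operators $\mathcal{L}$, $\mathcal{P}$ are bounded but not contractive, $\delta_1$ must be taken in terms of their operator norms and of $\delta_f$, and one must check that composition with $f$ preserves the quadratic structure of the bounds on $F$ and $G$, which is precisely where $f(0)=\partial_u f(0)=0$ (inherited from Assumption~1 and $\partial_u f_{NL}(0)=\lambda$) enters.
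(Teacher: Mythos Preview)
Your proposal is correct and follows essentially the same route as the paper: separate $\dot U$ into the linear part (handled exactly as in Theorem~1 to produce $-\mu_1U$) and the nonlinear part, bound the latter via the quadratic estimates on $F$ and $G$ to obtain $\bar K(\|\hat\gamma\|_\infty+\|\tilde\gamma\|_\infty)U$, and then invoke \eqref{ineq2} together with $\|\cdot\|_{\mathbb L^2}\le\|\cdot\|_\infty$ on $[0,1]$ and $\|\hat\gamma\|_{\mathbb L^2}+\|\tilde\gamma\|_{\mathbb L^2}\le CU^{1/2}$ to split into the $(\|\hat\gamma_x\|_\infty+\|\tilde\gamma_x\|_\infty)U$ and $U^{3/2}$ pieces. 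Your treatment of the choice of $\delta_1$ (ensuring $\mathcal L[\hat\gamma],\mathcal P[\tilde\gamma]$ stay in the region $|\cdot|\le\delta_f$) is in fact more careful than the paper's, which leaves this implicit.
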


\subsection{Analyzing the Growth of $\|\hat{\gamma}_t\|_{\mathbb{L}^2}+\|\tilde{\gamma}_t\|_{\mathbb{L}^2}$}

Define $\hat{\eta} = \hat{\gamma}_t$ and $\tilde{\eta} = \tilde{\gamma}_t$. Remark that the norms $\hat{\eta}$, $\tilde{\eta}$ and $\hat{\gamma}_{xx}$, $\tilde{\gamma}_{xx}$ are related according to lemma 1. Taking a partial derivative in $t$ along \eqref{er1}-\eqref{li3}, we have:
\begin{eqnarray}
\hat{\eta}_t(x,t) &=& \hat{\eta}_{xx}(x,t)+F_1[\hat{\gamma},\hat{\eta}]-\bar{p}_1(x)\tilde{\eta}_x(1,t),\label{err1}\\
\hat{\eta}(0,t) &=& 0,\\
\hat{\eta}(1,t) &=& 0,\\
\tilde{\eta}_t(x,t) &=& \tilde{\eta}_{xx}(x,t)+G_1[\hat{\gamma},\tilde{\gamma},\hat{\eta},\tilde{\eta}],\\
\tilde{\eta}(0,t) &=& 0,\\
\tilde{\eta}(1,t) &=& 0,\label{err3}
\end{eqnarray}
where
\begin{eqnarray}
F_1[\hat{\gamma},\hat{\eta}] &=& \mathcal{K}\left[\frac{\partial f}{\partial\hat{\gamma}}(\mathcal{L}[\hat{\gamma}])\mathcal{L}[\hat{\eta}]\right],\\
G_1[\hat{\gamma},\tilde{\gamma},\hat{\eta},\tilde{\eta}] &=& \mathcal{R}\left[\frac{\partial g}{\partial\hat{\gamma}}(\mathcal{L}[\hat{\gamma}])\mathcal{L}[\hat{\eta}],\frac{\partial g}{\partial\tilde{\gamma}}(\mathcal{P}[\tilde{\gamma}])\mathcal{P}[\tilde{\eta}]\right].
\end{eqnarray}
The above functionals satisfy the following bounds:
\begin{eqnarray}
\left|F_1[\hat{\gamma},\hat{\eta}]\right| &\leq& K_1 \left(|\hat{\gamma}|+\|\hat{\gamma}\|_{\mathbb{L}^1}\right)\left(|\hat{\eta}|+\|\hat{\eta}\|_{\mathbb{L}^1}\right),\\
\left|G_1[\hat{\gamma},\tilde{\gamma},\hat{\eta},\tilde{\eta}]\right| &\leq& K_2\left(|\hat{\gamma}|+\|\hat{\gamma}\|_{\mathbb{L}^1}\right)\left(|\hat{\eta}|+\|\hat{\eta}\|_{\mathbb{L}^1}\right)\\
&&+K_3\left(|\tilde{\gamma}|+\|\tilde{\gamma}\|_{\mathbb{L}^1}\right)\left(|\tilde{\eta}|+\|\tilde{\eta}\|_{\mathbb{L}^1}\right).\nonumber
\end{eqnarray}
Let
\begin{eqnarray}
V(t) = \frac{1}{2}\int_0^1\! \hat{\eta}(x,t)^2\,\mathrm{d}x + \frac{A}{2}\int_0^1\! \tilde{\eta}(x,t)^2\,\mathrm{d}x.\label{V}
\end{eqnarray}
The first derivative of \eqref{V} with respect to $t$ along \eqref{err1}-\eqref{err3} is given by:
\begin{eqnarray}
\dot{V}(t) &=& -\int_0^1\! \hat{\eta}_x(x,t)^2 \,\mathrm{d}x -A\int_0^1\! \tilde{\eta}_x(x,t)^2 \,\mathrm{d}x \nonumber\\
&&- \int_0^1\! \hat{\eta}(x,t)\bar{p}_1(x)\tilde{\eta}_x(1,t) \,\mathrm{d}x\nonumber\\
&&+\int_0^1\! \hat{\eta}(x,t)F_1[\hat{\gamma},\hat{\eta}] \,\mathrm{d}x\nonumber\\
&&+\int_0^1\! \tilde{\eta}(x,t)G_1[\hat{\gamma},\tilde{\gamma},\hat{\eta},\tilde{\eta}] \,\mathrm{d}x.\label{Vdotnn}
\end{eqnarray}
The last terms of the right hand side of \eqref{Vdotnn} are estimated as follow:
\begin{eqnarray}
\int_0^1\! |\hat{\eta}||F_1[\hat{\gamma},\hat{\eta}]| \,\mathrm{d}x&\leq& K_4\|\hat{\eta}\|_{\infty}\left(\|\hat{\gamma}\|^2_{\mathbb{L}^2}+\|\hat{\eta}\|^2_{\mathbb{L}^2}\right),\nonumber\\
\\
\int_0^1\! |\tilde{\eta}||G_1[\hat{\gamma},\tilde{\gamma},\hat{\eta},\tilde{\eta}]| \,\mathrm{d}x &\leq& K_5\|\tilde{\eta}\|_{\infty}\left(\|\hat{\gamma}\|^2_{\mathbb{L}^2}+\|\hat{\eta}\|^2_{\mathbb{L}^2}\right.\nonumber\\
&&\left.+\|\tilde{\gamma}\|^2_{\mathbb{L}^2}+\|\tilde{\eta}\|^2_{\mathbb{L}^2}\right).
\end{eqnarray}
Hence, we have
\begin{eqnarray}
&&\int_0^1\! |\hat{\eta}||F_1[\hat{\gamma},\hat{\eta}]|+|\tilde{\eta}||G_1[\hat{\gamma},\tilde{\gamma},\hat{\eta},\tilde{\eta}]| \,\mathrm{d}x\leq\nonumber\\
&& K_6\left(\|\hat{\eta}\|_{\infty}+\|\tilde{\eta}\|_{\infty}\right)V.
\end{eqnarray}
Since
\begin{eqnarray}
\|\hat{\eta}\|_{\infty}+\|\tilde{\eta}\|_{\infty} &\leq& K_7\left(\|\|\hat{\eta}_x\|_{\infty}+\|\tilde{\eta}_x\|_{\infty}\right).
\end{eqnarray}
We have:
\begin{eqnarray}
&&\int_0^1\! |\hat{\eta}||F_3[\hat{\gamma},\hat{\eta}]|+|\tilde{\eta}||F_4[\hat{\gamma},\tilde{\gamma},\hat{\eta},\tilde{\eta}]| \,\mathrm{d}x\leq\nonumber\\
&& K_8\left(\|\hat{\eta}_x\|_{\infty}+\|\tilde{\eta}_x\|_{\infty}\right)V\nonumber\\
\end{eqnarray}
Thus, we have the following result.
\begin{theorem}
There exists $\delta_2$ such that if $\|\hat{\gamma}\|_{\infty}+\|\tilde{\gamma}\|_{\infty}<\delta_2$ then
\begin{eqnarray}
\dot{V}(t) &\leq& -\mu_2V(t) + C_3 \left(\|\hat{\eta}_x\|_{\infty}+\|\tilde{\eta}_x\|_{\infty}\right)V(t)
\end{eqnarray}
where $\mu_2$ and $C_3$ are positive constants.
\end{theorem}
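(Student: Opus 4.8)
The proof of Theorem~5 mirrors that of Theorem~1 (and of Theorem~3), now carried out for the functional $V(t)$ in \eqref{V} and for the $t$-differentiated system \eqref{err1}--\eqref{err3}. The starting point is the expression \eqref{Vdotnn} for $\dot{V}(t)$: the first two terms, $-\int_0^1\hat{\eta}_x^2\,\mathrm{d}x$ and $-A\int_0^1\tilde{\eta}_x^2\,\mathrm{d}x$, are dissipative, while the remaining three — the boundary coupling term $-\int_0^1\hat{\eta}\,\bar{p}_1(x)\,\tilde{\eta}_x(1,t)\,\mathrm{d}x$ and the nonlinear remainders $\int_0^1\hat{\eta}\,F_1[\hat{\gamma},\hat{\eta}]\,\mathrm{d}x$ and $\int_0^1\tilde{\eta}\,G_1[\hat{\gamma},\tilde{\gamma},\hat{\eta},\tilde{\eta}]\,\mathrm{d}x$ — must be absorbed.

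The plan is as follows. First I would bound the boundary coupling term exactly as in the linear argument: with $B=\max_{x\in[0,1]}\bar{p}_1(x)$, Cauchy--Schwarz applied to $\int_0^1\hat{\eta}\,\bar{p}_1\,\mathrm{d}x$ followed by Young's inequality gives $-\tilde{\eta}_x(1,t)\,B\int_0^1\hat{\eta}\,\mathrm{d}x \le \tfrac12\int_0^1\hat{\eta}_x^2\,\mathrm{d}x + \tfrac{B^2}{2}\int_0^1\tilde{\eta}_x^2\,\mathrm{d}x$. Taking $A=B^2$, combining this with the dissipative terms leaves $-\tfrac12\int_0^1\hat{\eta}_x^2\,\mathrm{d}x - \tfrac{B^2}{2}\int_0^1\tilde{\eta}_x^2\,\mathrm{d}x$, and since $\hat{\eta}$ and $\tilde{\eta}$ vanish at both endpoints, Poincare's inequality bounds this by $-\mu_2 V(t)$ for a suitable $\mu_2>0$, in direct analogy with the $-\tfrac14 U$ estimate in the proof of Theorem~1.

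Next I would absorb the nonlinear remainders. The stated bounds on $F_1$ and $G_1$ are available once $\|\hat{\gamma}\|_{\infty}+\|\tilde{\gamma}\|_{\infty}$ is small enough that $|\mathcal{L}[\hat{\gamma}]|$ and $|\mathcal{P}[\tilde{\gamma}]|$ stay below the threshold $\delta_f$ of the Remark, so that the estimate $|\partial f/\partial u|\le K_2|u|$ and the corresponding estimate for $g$ apply; this determines $\delta_2$. With \eqref{ineq1} one then obtains $\int_0^1|\hat{\eta}||F_1|\,\mathrm{d}x + \int_0^1|\tilde{\eta}||G_1|\,\mathrm{d}x \le K_6\left(\|\hat{\eta}\|_{\infty}+\|\tilde{\eta}\|_{\infty}\right)V(t)$, the key being to extract the sup-norm factor from the $\hat{\eta}$ (resp. $\tilde{\eta}$) factor of each product so that the leftover is controlled by $V(t)$. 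Finally, since $\hat{\eta}(0,t)=\tilde{\eta}(0,t)=0$, one has $\|\hat{\eta}\|_{\infty}\le\|\hat{\eta}_x\|_{\mathbb{L}^1}\le\|\hat{\eta}_x\|_{\infty}$ and likewise for $\tilde{\eta}$, turning the previous bound into $K_8\left(\|\hat{\eta}_x\|_{\infty}+\|\tilde{\eta}_x\|_{\infty}\right)V(t)$. Adding the two estimates yields $\dot{V}(t)\le -\mu_2 V(t) + C_3\left(\|\hat{\eta}_x\|_{\infty}+\|\tilde{\eta}_x\|_{\infty}\right)V(t)$ with $C_3=K_8$, which is the assertion.

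The step I expect to be the main obstacle is the treatment of the boundary coupling term $-\int_0^1\hat{\eta}\,\bar{p}_1(x)\,\tilde{\eta}_x(1,t)\,\mathrm{d}x$: keeping the quadratic form in $(\hat{\eta}_x,\tilde{\eta}_x)$ negative definite after this term has been absorbed is what forces the choice $A=B^2$ and relies on the precise structure of the transformed system \eqref{err1}--\eqref{err3}, namely that the coupling enters only through the trace $\tilde{\eta}_x(1,t)$ with the bounded coefficient $\bar{p}_1$. The remaining ingredients — the Poincare step and the bilinear estimates for $F_1$ and $G_1$ — are routine, the only care being to trade the extracted $\|\cdot\|_{\infty}$ factor for a derivative sup-norm via the boundary condition rather than for the quantity that is assumed small (which is why this term survives in the final estimate instead of being absorbed).
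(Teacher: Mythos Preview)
Your proposal is correct and follows essentially the same route as the paper: you reproduce the linear absorption of the boundary coupling term via Young's inequality with the choice $A=B^2$ and Poincar\'e (exactly as in the proof of Theorem~1), then estimate the nonlinear remainders $\int|\hat{\eta}||F_1|$ and $\int|\tilde{\eta}||G_1|$ by $K_6(\|\hat{\eta}\|_\infty+\|\tilde{\eta}\|_\infty)V$ and finally trade $\|\hat{\eta}\|_\infty,\|\tilde{\eta}\|_\infty$ for $\|\hat{\eta}_x\|_\infty,\|\tilde{\eta}_x\|_\infty$ using the zero boundary data, which is precisely the paper's chain of inequalities leading to the stated bound. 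Your added remark on why the $\|\hat{\eta}_x\|_\infty$ factor is kept rather than absorbed through the small quantity $\|\hat{\gamma}\|_\infty$ is a helpful clarification that the paper leaves implicit.
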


\subsection{Analyzing the Growth of $\|\hat{\gamma}_{tt}\|_{\mathbb{L}^2}+\|\tilde{\gamma}_{tt}\|_{\mathbb{L}^2}$}

Define $\hat{\theta}=\hat{\eta}_t$ and $\tilde{\theta}=\tilde{\eta}_t$. Remark that the norms $\hat{\theta}$, $\tilde{\theta}$ and $\hat{\gamma}_{xxxx}$, $\tilde{\gamma}_{xxxx}$ are related according to lemma 2. Taking a partial derivative in $t$ along \eqref{err1}-\eqref{err3}, we have:
\begin{eqnarray}
\hat{\theta}_t(x,t) &=& \hat{\theta}_{xx}(x,t)+F_2[\hat{\gamma},\hat{\eta},\hat{\theta}]-\bar{p}_1(x)\tilde{\theta}_x(1,t),\\
\hat{\theta}(0,t) &=& 0,\\
\hat{\theta}(1,t) &=& 0,\\
\tilde{\theta}_t(x,t) &=& \tilde{\theta}_{xx}(x,t)+G_2[\hat{\gamma},\tilde{\gamma},\hat{\eta},\tilde{\eta},\hat{\theta},\tilde{\theta}],\\
\tilde{\theta}(0,t) &=& 0,\\
\tilde{\theta}(1,t) &=& 0,
\end{eqnarray}
where
\begin{eqnarray}
&&F_2[\hat{\gamma},\hat{\eta},\hat{\theta}] = \nonumber\\ &&\mathcal{K}\left[\frac{\partial^2f}{\partial\hat{\gamma}^2}(\mathcal{L}[\hat{\gamma}])\mathcal{L}[\hat{\eta}]\mathcal{L}[\hat{\eta}]+\frac{\partial f}{\partial\hat{\gamma}}(\mathcal{L}[\hat{\gamma}])\mathcal{L}[\hat{\theta}]\right]\\
&&G_2[\hat{\gamma},\tilde{\gamma},\hat{\eta},\tilde{\eta},\hat{\theta},\tilde{\theta}] = \nonumber\\ &&\mathcal{R}\left[\frac{\partial^2g}{\partial\hat{\gamma}^2}(\mathcal{L}[\hat{\gamma}])\mathcal{L}[\hat{\eta}]\mathcal{L}[\hat{\eta}]+\frac{\partial g}{\partial\hat{\gamma}}(\mathcal{L}[\hat{\gamma}])\mathcal{L}[\hat{\theta}],\right.\nonumber\\
&&\left.\frac{\partial^2g}{\partial\tilde{\gamma}^2}(\mathcal{P}[\tilde{\gamma}])\mathcal{P}[\tilde{\eta}]\mathcal{P}[\tilde{\eta}]+\frac{\partial g}{\partial\tilde{\gamma}}(\mathcal{P}[\tilde{\gamma}])\mathcal{P}[\tilde{\theta}]\right]
\end{eqnarray}
The above functionals satisfy the following bounds:
\begin{eqnarray}
&&|F_2[\hat{\gamma},\hat{\eta},\hat{\theta}]| \leq K_1 \left(|\hat{\eta}|^2+\|\hat{\eta}\|_{\mathbb{L}^1}^2\right)\nonumber\\
&&+ K_2 \left(|\hat{\gamma}|+\|\hat{\gamma}\|_{\mathbb{L}^1}\right)\times\left(|\hat{\theta}|+\|\hat{\theta}\|_{\mathbb{L}^1}\right)\\
&&|G_2[\hat{\gamma},\tilde{\gamma},\hat{\eta},\tilde{\eta},\hat{\theta},\tilde{\theta}]| \leq K_3 \left(|\hat{\eta}|^2+\|\hat{\eta}\|_{\mathbb{L}^1}^2\right)\nonumber\\
&&+K_4 \left(|\tilde{\eta}|^2+\|\tilde{\eta}\|_{\mathbb{L}^1}^2\right) \nonumber\\
&&+ K_5 \left(|\hat{\gamma}|+\|\hat{\gamma}\|_{\mathbb{L}^1}\right)\left(|\hat{\theta}|+\|\hat{\theta}\|_{\mathbb{L}^1}\right)\nonumber\\
&&+ K_6 \left(|\tilde{\gamma}|+\|\tilde{\gamma}\|_{\mathbb{L}^1}\right)\left(|\tilde{\theta}|+\|\tilde{\theta}\|_{\mathbb{L}^1}\right)
\end{eqnarray}
Let
\begin{eqnarray}
W(t) = \frac{1}{2}\int_0^1\! \hat{\theta}(x,t)^2\,\mathrm{d}x + \frac{A}{2}\int_0^1\! \tilde{\theta}(x,t)^2\,\mathrm{d}x.\label{W}
\end{eqnarray}
The first derivative of \eqref{W} with respect to $t$ along \eqref{err1}-\eqref{err3} is given by:
\begin{eqnarray}
\dot{W}(t) &=& -\int_0^1\! \hat{\theta}_x(x,t)^2 \,\mathrm{d}x -A\int_0^1\! \tilde{\theta}_x(x,t)^2 \,\mathrm{d}x \nonumber\\
&&- \int_0^1\! \hat{\theta}(x,t)\bar{p}_1(x)\tilde{\theta}_x(1,t) \,\mathrm{d}x\nonumber\\
&&+\int_0^1\! \hat{\theta}(x,t)F_2[\hat{\gamma},\hat{\eta},\hat{\theta}] \,\mathrm{d}x\nonumber\\
&&+\int_0^1\! \tilde{\theta}(x,t)G_2[\hat{\gamma},\tilde{\gamma},\hat{\eta},\tilde{\eta},\hat{\theta},\tilde{\theta}] \,\mathrm{d}x.\label{Vdotnnn}
\end{eqnarray}
The last terms of the right hand side of \eqref{Vdotnnn} are estimated as follow:
\begin{eqnarray}
&&\int_0^1\! |\hat{\theta}||F_2[\hat{\gamma},\hat{\eta},\hat{\theta}]| \,\mathrm{d}x\leq\nonumber\\
&&K_7\|\hat{\theta}\|_{\infty}\left(\|\hat{\gamma}\|^2_{\mathbb{L}^2}+\|\hat{\eta}\|^2_{\mathbb{L}^2}+\|\hat{\theta}\|^2_{\mathbb{L}^2}\right),\\
&&\int_0^1\! |\tilde{\theta}||G_2[\hat{\gamma},\tilde{\gamma},\hat{\eta},\tilde{\eta},\hat{\theta},\tilde{\theta}]| \,\mathrm{d}x \leq\nonumber\\
&&K_8\|\tilde{\theta}\|_{\infty}\left(\|\hat{\gamma}\|^2_{\mathbb{L}^2}+\|\hat{\eta}\|^2_{\mathbb{L}^2}+\|\hat{\theta}\|^2_{\mathbb{L}^2}\right.\nonumber\\
&&\left.+\|\tilde{\gamma}\|^2_{\mathbb{L}^2}+\|\tilde{\eta}\|^2_{\mathbb{L}^2}+\|\tilde{\theta}\|^2_{\mathbb{L}^2}\right).
\end{eqnarray}
Hence, we have:
\begin{eqnarray}
&&\int_0^1\! |\hat{\theta}||F_2[\hat{\gamma},\hat{\eta},\hat{\theta}]|+|\tilde{\theta}||G_2[\hat{\gamma},\tilde{\gamma},\hat{\eta},\tilde{\eta},\hat{\theta},\tilde{\theta}]| \,\mathrm{d}x\nonumber\\
&\leq& K_9\left(\|\hat{\theta}\|_{\infty}+\|\tilde{\theta}\|_{\infty}\right)\left(U+V+W\right)
\end{eqnarray}
Relating $\|\hat{\theta}\|_{\infty}$, $\|\tilde{\theta}\|_{\infty}$ and $\|\hat{\eta}_{xx}\|_{\infty}$, $\|\tilde{\eta}_{xx}\|_{\infty}$, we have:
\begin{eqnarray}
&&\int_0^1\! |\hat{\theta}||F_2[\hat{\gamma},\hat{\eta},\hat{\theta}]|+|\tilde{\theta}||G_2[\hat{\gamma},\tilde{\gamma},\hat{\eta},\tilde{\eta},\hat{\theta},\tilde{\theta}]| \,\mathrm{d}x\leq\nonumber\\
&& K_{10}\left(WV^{1/2}+VW^{1/2}+W^{3/2}\right).
\end{eqnarray}
Thus, we have the following result.
\begin{theorem}
There exists $\delta_3$ such that if $\|\hat{\gamma}\|_{\infty}+\|\tilde{\gamma}\|_{\infty}+\|\hat{\eta}\|_{\infty}+\|\tilde{\eta}\|_{\infty}<\delta_3$ then
\begin{eqnarray}
\dot{W} &\leq&-\frac{1}{4}Z+C_4\left(WZ^{1/2}+VW^{1/2}+W^{3/2}\right)
\end{eqnarray}
where $\mu_3$ and $C_4$ are positive constants.
\end{theorem}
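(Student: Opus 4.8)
The plan is to repeat, at the level of the second time derivative, the strict‑Lyapunov estimate already carried out for Theorem~1 and for Theorems~3 and~4. First I would take \eqref{Vdotnnn} as the starting point and split its right-hand side into three groups: the parabolic dissipation $-\|\hat\theta_x\|^2_{\mathbb{L}^2}-A\|\tilde\theta_x\|^2_{\mathbb{L}^2}$, the observer cross term $-\int_0^1\hat\theta\,\bar p_1(x)\,\tilde\theta_x(1,t)\,\mathrm{d}x$, and the nonlinear terms $\int_0^1\hat\theta\,F_2\,\mathrm{d}x+\int_0^1\tilde\theta\,G_2\,\mathrm{d}x$. The cross term has the same structure as in the proof of Theorem~1: writing $B=\max_{x\in[0,1]}\bar p_1(x)$ and using Young's inequality it is bounded by $\tfrac12\|\hat\theta_x\|^2_{\mathbb{L}^2}+\tfrac{B^2}{2}\|\tilde\theta_x\|^2_{\mathbb{L}^2}$, so with the choice $A=B^2$ (the same $A$ fixed in \eqref{U}, \eqref{V} and \eqref{W}) the first two groups together are $\le -\tfrac14 Z$, where $Z:=\tfrac12\|\hat\theta_x\|^2_{\mathbb{L}^2}+\tfrac{A}{2}\|\tilde\theta_x\|^2_{\mathbb{L}^2}$ is the gradient energy of $(\hat\theta,\tilde\theta)$ and the surplus dissipation is discarded for later use.

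Next I would estimate the nonlinear group using the bounds on $F_2$ and $G_2$ stated above together with \eqref{ineq1}, which absorbs the $\mathbb{L}^1$ factors into the matching $\mathbb{L}^2$ norms; this reproduces the displayed estimate $\int_0^1|\hat\theta||F_2|+|\tilde\theta||G_2|\,\mathrm{d}x\le K_9\big(\|\hat\theta\|_{\infty}+\|\tilde\theta\|_{\infty}\big)\big(U+V+W\big)$. To bring it to the claimed shape I would then apply \eqref{ineq2} to get $\|\hat\theta\|_{\infty}+\|\tilde\theta\|_{\infty}\le K\big(\|\hat\theta\|_{\mathbb{L}^2}+\|\hat\theta_x\|_{\mathbb{L}^2}+\|\tilde\theta\|_{\mathbb{L}^2}+\|\tilde\theta_x\|_{\mathbb{L}^2}\big)\le K'\big(W^{1/2}+Z^{1/2}\big)$, and invoke Lemma~2 under the smallness hypothesis $\|\hat\gamma\|_{\infty}+\|\tilde\gamma\|_{\infty}+\|\hat\eta\|_{\infty}+\|\tilde\eta\|_{\infty}<\delta_3$ to dominate the lowest-order energy $U$ by $W$. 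Expanding $\big(W^{1/2}+Z^{1/2}\big)\big(U+V+W\big)$ then leaves the monomials $W^{3/2}$, $WZ^{1/2}$, $VW^{1/2}$ together with a leftover $VZ^{1/2}$ that is absorbed into $-\tfrac14 Z$ via $VZ^{1/2}\le\varepsilon Z+C_\varepsilon V^2$ and the smallness of $\delta_3$; combining what remains with $-\tfrac14 Z$ and collecting all constants into $C_4$ gives the inequality of the theorem, with $\delta_3$ chosen small enough to justify every absorption above.

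I expect the main obstacle to be the bookkeeping in this last step. A priori $U$, $V$, $W$ and $Z$ are four separate energies, and only $W$ and $Z$ are tied (via Lemma~2) to the top-order $\mathbb{H}^4$ norm, so one must track carefully which low-order energy is allowed to multiply which small factor and must not generate terms outside the list $\{W^{3/2},WZ^{1/2},VW^{1/2}\}$. The cleanest resolution is to shrink $\delta_3$ so that on the $\mathbb{H}^4$ ball Lemma~2 gives $U\le KW$ and the stray $VZ^{1/2}$ is absorbed into the reserved dissipation; alternatively one keeps the mixed bound $K_9\big(\|\hat\theta\|_{\infty}+\|\tilde\theta\|_{\infty}\big)(U+V+W)$ unchanged and postpones the reduction to the global coupled argument, where $U$ and $V$ are already known from Theorems~3 and~4 to be small and exponentially decaying. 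A secondary check is that the single choice $A=B^2$ simultaneously renders the quadratic forms appearing in \eqref{Vdot}, in $\dot V$ and in $\dot W$ negative definite; this is immediate because the offending cross term has the identical structure at every level of the hierarchy.
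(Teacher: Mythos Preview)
Your proposal follows essentially the same route as the paper: start from \eqref{Vdotnnn}, dispose of the observer cross term exactly as in Theorem~1 with the same choice $A=B^2$, and bound the nonlinear group by $K_9\bigl(\|\hat\theta\|_\infty+\|\tilde\theta\|_\infty\bigr)(U+V+W)$. The only difference is in the last reduction: the paper bounds $\|\hat\theta\|_\infty,\|\tilde\theta\|_\infty$ by going back through the $\eta$-equation and ``relating $\|\hat\theta\|_\infty,\|\tilde\theta\|_\infty$ and $\|\hat\eta_{xx}\|_\infty,\|\tilde\eta_{xx}\|_\infty$'' (i.e.\ using $\theta=\eta_t=\eta_{xx}+\text{small}$), whereas you apply the Sobolev inequality \eqref{ineq2} directly to $\theta$ and keep the gradient energy $Z$. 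Both routes are legitimate and lead to the same list of monomials.

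One point to flag: your appeal to Lemma~2 for ``$U\le KW$'' is not what that lemma says. Lemma~2 asserts the equivalence of $\|\hat\gamma_{tt}\|_{\mathbb L^2}+\|\tilde\gamma_{tt}\|_{\mathbb L^2}+\cdots+\|\hat\gamma\|_{\mathbb L^2}+\|\tilde\gamma\|_{\mathbb L^2}$ with $\|\hat\gamma\|_{\mathbb H^4}+\|\tilde\gamma\|_{\mathbb H^4}$; it does not give dominance of the lowest-order piece by the highest. The paper is just as cavalier at this step, passing from $(U+V+W)$ to the displayed three-term bound without comment. Your second alternative---keeping the mixed bound $K_9(\|\hat\theta\|_\infty+\|\tilde\theta\|_\infty)(U+V+W)$ and absorbing the stray $U$- and $V$-terms only at the level of the combined functional $S=U+V+W$ in the final subsection---is the correct and clean way to close the argument, and is in fact how the paper ultimately uses Theorem~5.
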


\subsection{Proof of $\mathbb{H}^4$ Stability of ($\hat{\gamma}$, $\tilde{\gamma}$)}

Define $S=U+V+W$, and combining theorem 3, 4, and 5, there exists $\delta$ such that if $\|\hat{\gamma}\|_{\infty}+\|\tilde{\gamma}\|_{\infty}+\|\hat{\eta}\|_{\infty}+\|\tilde{\eta}\|_{\infty}<\delta$ then
\begin{eqnarray}
\dot{S}\leq-\mu S+CS^{3/2}
\end{eqnarray}
for some positive $\mu$ and $C$. Then, for any $\mu_0$ such that $0<\mu_0<\mu$, there exists $\delta_0$ such that
\begin{eqnarray}
C\left|S^{3/2}\right| < \left(\mu-\mu_0\right)S,\;\forall S<\delta_0,
\end{eqnarray}
which implies that
\begin{eqnarray}
\dot{S}<-\mu_0S,\;\forall S<\delta_0.
\end{eqnarray}
Noting that $\|\hat{\gamma}\|_{\infty}+\|\tilde{\gamma}\|_{\infty}+\|\hat{\eta}\|_{\infty}+\|\tilde{\eta}\|_{\infty}\leq \bar{C}S$ for $\bar{C}>0$, then for sufficiently small $S(0)$, we have $S(t)\rightarrow0$ exponentially. Since $S$ is equivalent to $\|\hat{\gamma}\|_{\mathbb{H}^4}+\|\tilde{\gamma}\|_{\mathbb{H}^4}$ when $\|\hat{\gamma}\|_{\infty}+\|\tilde{\gamma}\|_{\infty}+\|\hat{\eta}\|_{\infty}+\|\tilde{\eta}\|_{\infty}$ is sufficiently small, this concludes the proof.

\section{NUMERICAL EXAMPLE}

We consider output feedback boundary stabilization problem of the FitzHugh-Nagumo equation as follow:
\begin{eqnarray}
u_t &=& u_{xx}-u(1-u)^2,\label{fish}\\
u(0,t) &=& 0,\\
u(1,t) &=& U(t).
\end{eqnarray}
This equation was proposed independently by FitzHugh \cite{Fitz} and Nagumo, et al. \cite{Nagumo} during 60's to model active pulse transmission line in nerve membrane. Applying our control law \eqref{cont}, where the state $\hat{u}$ is generated from \eqref{obsmain1}, the state is derived to its equilibrium $\hat{u}\equiv0$, as can be seen from \textbf{Fig}. \ref{fig1}.
\begin{figure}[h!]
  \centering
      \includegraphics[width=0.5\textwidth]{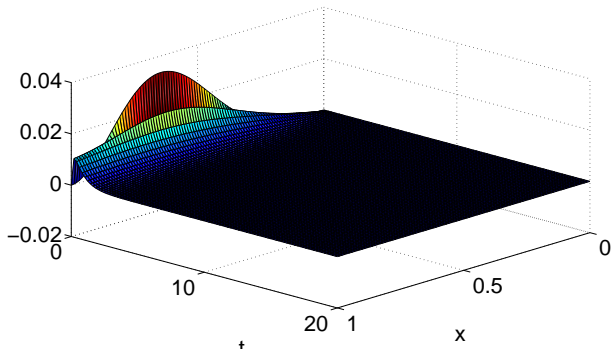}
  \caption{Stabilization by output feedback.}
\label{fig1}
\end{figure}

The estimation error also converge to zero as shown from \textbf{Fig}. \ref{fig2}.
\begin{figure}[h!]
  \centering
      \includegraphics[width=0.5\textwidth]{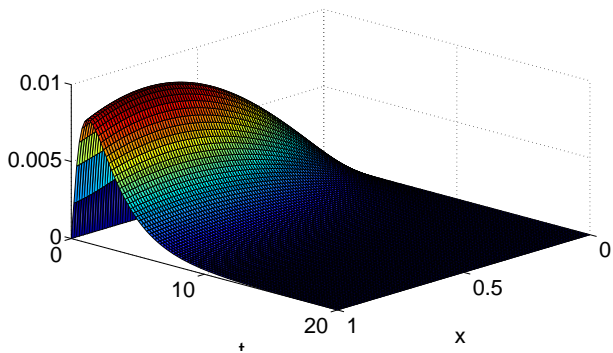}
  \caption{Estimation error $\tilde{u}(x,t)$.}
\label{fig2}
\end{figure}

\section{CONCLUSIONS AND FUTURE WORKS}

We have solved output feedback boundary stabilization problem for a class of semilinear parabolic PDEs with actuation and measurement on only one boundary (collocated setup). The state and the observer error systems are shown to be locally exponentially stable in the $\mathbb{H}^4$ norm. The strict Lyapunov function used in this paper could be used to handle nonlinearity in other PDEs such as the Korteweg-de Vries equation and the Burgers equation. We aim to address these problems in future work.

\end{document}